\documentclass[reqno]{amsart}
\usepackage[psamsfonts]{amssymb}
\usepackage{amsthm}
\usepackage{color}

\newtheorem{Theorem}{Theorem}
\newtheorem{lemma}{Lemma}

\theoremstyle{remark}

\newtheorem{Definition}{Definition}

 \theoremstyle{Definition}
 \theoremstyle{remark}
 \newtheorem{ex}{Example}
 \numberwithin{equation}{section}

\usepackage{graphicx}%
\usepackage{multirow}%
\usepackage{amsmath,amssymb,amsfonts}%
\usepackage{amsthm}%
\usepackage{mathrsfs}%
\usepackage[title]{appendix}%
\usepackage{xcolor}%
\usepackage{textcomp}%
\usepackage{manyfoot}%
\usepackage{booktabs}%
\usepackage{algorithm}%
\usepackage{algorithmicx}%
\usepackage{algpseudocode}%
\usepackage{listings}%

 \usepackage{graphicx,amssymb,upref}
\usepackage{graphicx}
\usepackage{float}
\usepackage{enumerate}
\usepackage{hyperref}
\usepackage[utf8]{inputenc}
\usepackage[english]{babel}
\usepackage[square,numbers]{natbib}
\def\bege{\begin{equation}} \def\ende{\end{equation}}

   \def\begr{\begin{eqnarray}}
\def\endr{\end{eqnarray}} 
 
\def\bege{\begin{equation}} \def\ende{\end{equation}}
\def\begr{\begin{eqnarray}} \def\endr{\end{eqnarray}}
\def\bnum{\begin{enumerate}} \def\enum{\end{enumerate}}

\allowdisplaybreaks[4]

\begin{document}

\title[Complex symmetric operator ]{Commutants of complex symmetric weighted composition operators on Fock space}
\thanks{The author is thankful to  DST(India) for WISE-PDF  (File no. DST/WISE-PDF/PM-46/2023.\\ $^1$The corresponding author}
\keywords {Weighted composition operators; Fock spaces; commutants; Normal operator; Self-adjoint operator; Complex symmetric operator}
\subjclass[2010]{47B33, 47B38, 46E10, 32A37.}

\author[A. Sharma]{$^1$Aakriti
 Sharma}

\address{
Department of Mathematics, University of Jammu-180006, J\&{K},
 India.}

\email{aakritishma321@gmail.com}

\begin{abstract}

In this paper, we investigate weighted composition operators $W_{f,g}$ commuting with complex symmetric weighted composition operators $W_{\psi, \varphi}$ on the classical Fock space $\mathcal{F}^{2}(\mathbb{C})$. In particular, we investigate the symbols $f$ and $g$ give rise to $W_{f,g}$ commuting with a complex symmetric weighted composition operator on $\mathcal{F}^{2}$. We further characterize when the commuting weighted composition operators are self-adjoint and normal.

 \end{abstract}

\maketitle

\section{Introduction}\label{sec:intr}
Let $\mathcal{F}^{2}(\mathbb{C})$ denote the classical Fock space of entire functions on the complex plane $\mathbb{C}$. This space consists of all entire functions $f$ satisfying
\[
\|f\|^{2}
=
\frac{1}{\pi}
\int_{\mathbb{C}}
|f(z)|^{2}e^{-|z|^{2}}\,dA(z)
<
\infty,
\]
where $dA$ denotes the planar Lebesgue measure. The Fock space is a reproducing kernel Hilbert space with reproducing kernel
\[
K_{w}(z)=e^{\overline{w}z},
\qquad z,w\in\mathbb{C}.
\]
Because of its close connection with quantum mechanics, harmonic analysis, and operator theory, the Fock space has become an important framework for studying linear operators induced by analytic mappings. To know more about Fock spoaces, we refer \cite{KZ} and references therein.

Among the most widely studied operators on spaces of analytic functions are composition operators and weighted composition operators. Given entire functions $\psi$ and $\varphi$, the weighted composition operator $W_{\psi,\varphi}$ on $\mathcal{F}^{2}(\mathbb{C})$ is defined by
\begin{equation*}
W_{\psi,\varphi}h
=
\psi\cdot(h\circ\varphi),
\qquad h\in\mathcal{F}^{2}(\mathbb{C}).
\end{equation*}
Special cases include the multiplication operator $M_{\psi}$ when $\varphi(z)=z$, and the composition operator $C_{\varphi}$ when $\psi\equiv1$.
The action of adjoint of weighted composition operator on kernel of fock space is defined by
\begin{equation}\label{1}
    W^{*}_{\psi,\varphi}K_{w}=\overline{\psi} K_{\varphi(w)},\qquad w\in \mathbb{C}
\end{equation}

Weighted composition operators on the Fock space have been studied extensively in recent years. Their boundedness, compactness, invertibility, spectra, self-adjointness, normality, and unitary properties have been investigated by many authors; see for example \cite{GH,PVL,PVL2,TL,TL1,NST,LZ1,LZ2,LZ3}.A significant contribution was made by Trieu Le \cite{TL1}, who established simple and complete characterizations of bounded and compact weighted composition operators on the classical Fock space.


The systematic study of complex symmetric operators was initiated by Garcia and Putinar\cite{GP1}, who showed that many important classes of operators possess complex symmetric structures. Since then, complex symmetric weighted composition operators have been studied on Hardy spaces, Bergman spaces, and Fock spaces, revealing strong geometric restrictions on the inducing symbols, for references see \cite{GP1}-\cite{NST} and references therein.

A central topic in this paper is the study of commutant of complex symmetric operators.
Determining the commutant of weighted composition operators is generally a difficult problem because commuting relations often reduce to complicated functional equations involving the inducing symbols. Early contributions in this direction were made by Cowen\cite{CC1,CC2}, who studied commutants of analytic Toeplitz and composition operators. Later, Cload  \cite{WC} investigated commutants of composition operators in detail. More recently, E. Ko \cite{EK} characterized weighted composition operators commuting with self-adjoint weighted composition operators on the Hardy space $H^{2}$, while Bhuia \cite{SRB} studied operators commuting with complex symmetric weighted composition operators on Hardy spaces.

Motivated by these developments and the papers \cite{SRB} and \cite{EK}, in this paper we investigate the commutants of complex symmetric weighted composition operators on the Fock space $\mathcal{F}^{2}(\mathbb{C})$. We derive necessary and sufficient conditions for a weighted composition operator to commute with a given complex symmetric weighted composition operator. We further characterize the structure of operators belonging to the commutants and analyze special cases including normal and self-adjoint weighted composition operators. Our approach relies heavily on reproducing kernel techniques, functional equations, and explicit formulas for adjoint operators on the Fock space.

The organization of the paper is as follows. In Section 2, we present preliminaries concerning the Fock space, reproducing kernels, and weighted composition operators. In section 3, we establish the main commutant characterizations for complex symmetric weighted composition operators. In Section 4, discusses Self adjointness and Normality of commuting weighted composition operators and an example illustrating the obtained results.

\section{Preliminaries}
In this section, we collect several definitions and known results that will be used throughout the paper. Then, we recall some important facts concerning complex symmetric operators, conjugations, and affine inducing symbols on the Fock space.
We recall the following standard definitions:
\begin{Definition}
    A conjugation $C: H \longrightarrow H$ is a mapping on a complex Hilbert space $H$ which is
\begin{enumerate}
 \item[{(i)}]  Conjugate linear: $C(b x+\beta y)=\overline{b}C x+\overline{\beta}Cy$ , for all $x,y$ in $H$ and $b, \beta\in \mathbb{C}.$
 \item[{(ii)}] Involutive: $C^{2}=I \text{ and }$
 \item[{(iii)}] Isometric: $\|Cx\|=\|x\|$, for all $x\in H$.
 \end{enumerate}
\end{Definition}
Throughout the paper we will consider the Conjugation $J_{a,b,c}$ defined by
 \begin{equation}
   J_{a,b,c}h(z)=  ce^{bz}\overline{h(\overline{az+b})}, \text{ where } |a|=1, \overline{a}b+\overline{b}=0, |c|^{2}e^{|b|^{2}=1}
 \end{equation}
 In the special case $a=1$, $b=0$, and $c=1$, the conjugation $J_{a,b,c}$ reduces to the standard conjugation
 $J_{1,0,1}h(z)=\overline{h(\overline{z})}$
 considered in\cite{GH}.
 
\begin{Definition} Let $\mathcal{T}$ be a bounded linear operator, $\mathcal{T}^{*}$ be an adjoint of $\mathcal{T}$ and $C$ be a conjugation on Hilbert space $H$. Then 
\begin{enumerate}
    \item[{(i)}]$\mathcal{T}$ is $C$-complex symmetric on $H$ if $\mathcal{T}=C\mathcal{T}^{*}C$,
    \item[{(i)}]$\mathcal{T}$ is self-adjoint if $\mathcal{T}=\mathcal{T}^{*}$ and
    \item[{(i)}]$\mathcal{T}$ is normal if $\mathcal{T} \mathcal{T}^{*}=\mathcal{T}^{*}\mathcal{T}$ or $\|\mathcal{T}\|=\|\mathcal{T}^{*}\|$.  
\end{enumerate}  
\end{Definition}
\begin{Definition}
     For a bounded operator $T$, the commutant of $T$ is defined by
$$
\{T\}'=\{A\in\mathcal{B}(\mathcal{H}):AT=TA\}.
$$
\end{Definition}

We shall frequently use the following characterization of complex symmetric weighted composition operators on the Fock space.
\begin{Theorem}\cite{PVL}\label{t1}
  Let $W_{\psi, \varphi}$ be bounded weighted composition operator on $\mathcal{F}^{2}$ induced by two entire function $\psi$ and $\varphi$ with $\psi\not\equiv 0 $. Then  $W_{\psi, \varphi}$ is $J_{a,b,c}$-symmetric  if and only if
  $$\varphi(z)= Az+B,\quad \psi(z)=Ce^{Dz}$$ 
  with $C\neq 0$, $D=aB-bA+b$ where $A$, $B$ are complex constants satisfying either $|A|< 1$
  or $|A|=1, D+A\overline{B}=0$.
\end{Theorem}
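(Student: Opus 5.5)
The plan is to verify the identity $W_{\psi,\varphi}=J_{a,b,c}W^{*}_{\psi,\varphi}J_{a,b,c}$ on reproducing kernels. Both sides are bounded conjugate-linear compositions, and the span of $\{K_w\}$ is dense in $\mathcal{F}^{2}$, so equality on kernels is enough. The computation proceeds in three steps.

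First, I compute $J_{a,b,c}K_w$. Directly,
\[
J_{a,b,c}K_w(z)=c\,e^{bz}\,\overline{e^{\overline{w}\,\overline{(az+b)}}}=c\,e^{wb}\,e^{(a w+b)z},
\]
so $J_{a,b,c}K_w=c\,e^{bw}K_{\overline{aw+b}}$.

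Second, I apply \eqref{1} to obtain $W^*_{\psi,\varphi}J_{a,b,c}K_w=c\,e^{bw}\,\overline{\psi(\overline{aw+b})}\,K_{\varphi(\overline{aw+b})}$.

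Third, I apply $J_{a,b,c}$ again, using conjugate linearity and the first step. This gives
\[
J_{a,b,c}W^*_{\psi,\varphi}J_{a,b,c}K_w(z)=\overline{c}\,c\,e^{\overline{bw}}\,\psi(\overline{aw+b})\,e^{b\overline{\varphi(\overline{aw+b})}}\,K_{\overline{a\overline{\varphi(\overline{aw+b})}+b}}(z).
\]
This must be compared with $W_{\psi,\varphi}K_w(z)=\psi(z)e^{\overline{w}\varphi(z)}$.

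Equating the two expressions for all $z,w$ gives the necessary conditions. Putting $w=0$ shows that $\psi$ is a constant times an exponential of a linear function of $z$, so $\psi=Ce^{Dz}$. Comparing the $z$-dependence for general $w$ then forces $\overline{w}\varphi(z)$ to be affine in $z$ with coefficient depending conjugate-linearly on $w$, so $\varphi(z)=Az+B$. Substituting these forms and matching the coefficients of $z$, $\overline{w}z$, $\overline{w}$ and the constant yields $D=aB-bA+b$. Here the relations $|a|=1$, $\overline{a}b+\overline{b}=0$ and $|c|^2e^{|b|^2}=1$ are used to simplify the constants.

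The boundedness constraint comes from Le's characterization \cite{TL1}. It requires either $|A|<1$, or $|A|=1$ together with $D+A\overline{B}=0$, which is the stated condition. For the converse, I plug the affine and exponential forms back into the kernel identity and check that all coefficients agree.

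The main obstacle is the bookkeeping of the exponent coefficients. In particular I need to confirm that the constant terms cancel using $|c|^2e^{|b|^2}=1$ and $\overline{a}b=-\overline{b}$, and this is where I would be most careful.
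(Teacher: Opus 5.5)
\textbf{Verdict: the approach is sound, but the formula in your third step is wrong, and every later step depends on it.}

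Your strategy is the standard kernel argument: test $W_{\psi,\varphi}=J_{a,b,c}W^{*}_{\psi,\varphi}J_{a,b,c}$ on $K_w$, read off $\psi$ at $w=0$, deduce that $\varphi$ is affine, match coefficients, and take the $|A|$ dichotomy from Le's boundedness criterion. The paper itself gives no proof and only cites the source.

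Your first step gives $J_{a,b,c}K_v=c\,e^{bv}K_{\overline{av+b}}$ for every $v$. Applying it with $v=\varphi(\overline{aw+b})$ puts no conjugate on $\varphi(\overline{aw+b})$:
\[
J_{a,b,c}W^{*}_{\psi,\varphi}J_{a,b,c}K_w(z)=|c|^{2}e^{\overline{b}\,\overline{w}}\,\psi(\overline{aw+b})\,e^{b\varphi(\overline{aw+b})}\,e^{(a\varphi(\overline{aw+b})+b)z}.
\]
Your version has $\overline{\varphi(\overline{aw+b})}$ in two places. It is then in general not antiholomorphic in $w$, whereas $W_{\psi,\varphi}K_w(z)=\psi(z)e^{\overline{w}\varphi(z)}$ is. If you actually match coefficients with it, using $\varphi(z)=Az+B$:
\begin{itemize}
\item the right side contains $a^{2}\overline{A}\,wz$ and no $\overline{w}z$ term, so matching $A\overline{w}z$ forces $A=0$;
\item the $z$-coefficient gives $D=a\overline{B}+ab\overline{A}+b$, not $aB-bA+b$.
\end{itemize}
So your assertion that the matching ``yields $D=aB-bA+b$'' does not follow from what you wrote.

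With the corrected formula the argument goes through, and you should write the matching out rather than assert it. Here $\varphi(\overline{aw+b})=A\overline{a}\,\overline{w}+A\overline{b}+B$.
\begin{itemize}
\item The $\overline{w}z$-coefficient is $a\overline{a}A=A$, as required.
\item The $z$-coefficient is $aA\overline{b}+aB+b=aB-bA+b$, since $a\overline{b}=-b$ (the conjugate of $\overline{a}b+\overline{b}=0$). This gives $D$.
\item The $\overline{w}$-coefficient requires $B=\overline{b}+\overline{a}D+\overline{a}bA$. After substituting $D$ this reduces to $\overline{a}b+\overline{b}=0$.
\item The constant requires $|c|^{2}e^{D\overline{b}+A|b|^{2}+bB}=1$. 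The exponent equals $|b|^{2}+B(a\overline{b}+b)=|b|^{2}$, so this is exactly $|c|^{2}e^{|b|^{2}}=1$.
\end{itemize}
Reading the same computation backwards gives the converse.

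You can cut the bookkeeping by testing the equivalent identity $W_{\psi,\varphi}J_{a,b,c}K_w=J_{a,b,c}W^{*}_{\psi,\varphi}K_w$, which is equivalent because $J_{a,b,c}^{2}=I$. It needs only one application of $J_{a,b,c}$ on each side and reads
\[
\psi(z)e^{bw}e^{(aw+b)\varphi(z)}=\psi(w)e^{b\varphi(w)}e^{(a\varphi(w)+b)z}.
\]
With $\varphi(z)=Az+B$ and $\psi(z)=Ce^{Dz}$, comparing the $z$- and $w$-coefficients both give $D=aB-bA+b$ directly.

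A minor point: $W_{\psi,\varphi}$ and $J_{a,b,c}W^{*}_{\psi,\varphi}J_{a,b,c}$ are both complex-linear, not conjugate-linear. That is what lets agreement on the kernels extend to their span and then, by continuity, to all of $\mathcal{F}^{2}$.
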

To know more about the boundedness, compactness, self-adjointness and normality of weighted composition operators on Fock space, we refer \cite{TL1}and references therein.
The following eigenstructure result will also be needed.
\begin{Theorem}\cite{PVL}\label{t2}Let $W_{\psi, \varphi}$ be a $J_{a,b,c}$-symmetric bounded weighted composition operator on $\mathcal{F}^{2}$ induced by two entire function $\psi$ and $\varphi$ with $\psi\not\equiv 0 $. If $\varphi$ has a fixed point $d(=\frac{B}{1-A})$, then 
$$\Omega_{k}(z)= (z-d)^{k}K_{\overline{\alpha}}(z),\;\;\; k\in \mathbb{N}\cup \{0\}, \alpha=\frac{D}{1-A}.  $$
are eigenvectors of $W_{\psi, \varphi}$ corresponding to the eigenvalues $A^{k}\psi(d)$.
\end{Theorem}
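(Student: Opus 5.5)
The plan is a direct computation. By Theorem~\ref{t1}, the symbols must have the form $\varphi(z)=Az+B$ and $\psi(z)=Ce^{Dz}$ with $C\neq0$. The formula $d=\frac{B}{1-A}$ presupposes $A\neq 1$, and I will assume this throughout (the degenerate case $A=1$, $B=0$, where $\varphi$ is the identity, is excluded by the formula itself). So $d$ is the unique fixed point of $\varphi$, and $\alpha=\frac{D}{1-A}$ is well defined.

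First I check that each $\Omega_k$ lies in $\mathcal{F}^{2}$. Since $\Omega_k(z)=(z-d)^k e^{\alpha z}$ is a polynomial times the kernel $K_{\overline{\alpha}}$, the Gaussian weight $e^{-|z|^2}$ makes the norm integral finite. Also $\Omega_k\not\equiv 0$, so it is a legitimate candidate eigenvector.

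Next I apply the operator:
\[
W_{\psi,\varphi}\Omega_k(z)=Ce^{Dz}\,\bigl(Az+B-d\bigr)^k\,e^{\alpha(Az+B)}.
\]
The fixed-point relation $d=Ad+B$ gives $Az+B-d=A(z-d)$, so the polynomial factor becomes $A^k(z-d)^k$. In the exponent, the coefficient of $z$ is
\[
D+\alpha A=D+\frac{DA}{1-A}=\frac{D}{1-A}=\alpha,
\]
and the constant term is $\alpha B$. Hence
\[
W_{\psi,\varphi}\Omega_k(z)=Ce^{\alpha B}A^k(z-d)^k e^{\alpha z}=Ce^{\alpha B}A^k\,\Omega_k(z).
\]

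Finally I identify the constant:
\[
\psi(d)=Ce^{Dd}=Ce^{\frac{DB}{1-A}}=Ce^{\alpha B}.
\]
Therefore $W_{\psi,\varphi}\Omega_k=A^k\psi(d)\,\Omega_k$, which is the claim.

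There is no real obstacle here. The only points needing care are the two algebraic identities $D+\alpha A=\alpha$ and $Dd=\alpha B$, and the observation that the constraint $D=aB-bA+b$ from Theorem~\ref{t1} is never actually used. The eigenvector property holds for any weighted composition operator with affine $\varphi$ (with $A\neq1$) and exponential $\psi$.
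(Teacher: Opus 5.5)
Your proof is correct and matches the approach the paper relies on. The paper quotes this result from \cite{PVL} without proof, but in the proof of Theorem~\ref{t3} it checks the case $k=0$ the same way you do, using the identities $D+\alpha A=\alpha$ and $\alpha B=dD$ of Lemma~\ref{l1}(1). Your extra step $Az+B-d=A(z-d)$, which follows from $d=Ad+B$, is what extends that computation to all $k$.
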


\section{Commutants of complex symmetric weighted composition operators}\label{sec:prelimi}
In this section, we investigate the commutant of a complex symmetric weighted composition operator. We derive necessary and sufficient conditions for two weighted composition operators to commute. The commuting relation leads to functional equations involving the symbols, and these equations are analyzed using reproducing kernel techniques and analytic function theory. To establish the main theorem, we first require the following auxiliary lemmas.
\begin{lemma}\label{1l}
    Let $W_{f,g}$ and  $W_{\psi, \varphi}$ be two weighted composition operators on $\mathcal{F}^{2}$, where $f, g, \psi, \varphi$ are entire functions. Then $W_{f,g}\in \{W_{\psi, \varphi}\}^{'}$ if and only if $f.\psi \circ g =\psi .f\circ \phi$ and $\varphi \circ g = g \circ \varphi$.
\end{lemma}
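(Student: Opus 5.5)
We compute both compositions directly on an arbitrary $h\in\mathcal{F}^{2}$. For every entire $h$,
\[
W_{f,g}W_{\psi,\varphi}h = f\cdot(\psi\circ g)\cdot(h\circ\varphi\circ g),\qquad
W_{\psi,\varphi}W_{f,g}h = \psi\cdot(f\circ\varphi)\cdot(h\circ g\circ\varphi).
\]
(The symbol $\phi$ in the statement is read as $\varphi$.) Sufficiency is then immediate: if $f\cdot\psi\circ g=\psi\cdot f\circ\varphi$ and $\varphi\circ g=g\circ\varphi$, the two right-hand sides agree for every $h$, so $W_{f,g}W_{\psi,\varphi}=W_{\psi,\varphi}W_{f,g}$ and hence $W_{f,g}\in\{W_{\psi,\varphi}\}'$.

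For necessity, assume the operators commute and test the identity on simple functions in $\mathcal{F}^{2}$. Taking $h\equiv 1=K_0$ gives $f\cdot\psi\circ g=\psi\cdot f\circ\varphi$ as entire functions, which is the first condition. Denote this common entire function by $F$. Next take $h(z)=z$, which also lies in $\mathcal{F}^{2}$. Then $F\cdot(\varphi\circ g)=F\cdot(g\circ\varphi)$ on $\mathbb{C}$.

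The only real point is to cancel $F$. If $F\not\equiv0$, its zero set is discrete. So $\varphi\circ g=g\circ\varphi$ holds off a discrete set, and by continuity (or the identity theorem) it holds everywhere. This requires $f\not\equiv0$ and $\psi\not\equiv0$ (implicit in the setting, e.g.\ $\psi\not\equiv0$ as in Theorem~\ref{t1}). We must then also rule out the degenerate case $F\equiv0$ with $f,\psi\not\equiv0$. In that case $\psi\circ g\equiv0$, since $f\not\equiv0$ and the ring of entire functions has no zero divisors. Hence $g$ maps $\mathbb{C}$ into the discrete zero set of $\psi$, so $g$ is a constant $g_0$ with $\psi(g_0)=0$. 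Likewise $f\circ\varphi\equiv0$ forces $\varphi$ to be a constant $\varphi_0$ with $f(\varphi_0)=0$. In this degenerate situation the commutation relation holds trivially (both products are $0$), but $\varphi\circ g=g\circ\varphi$ would require $\varphi_0=g_0$. So the lemma needs the standing nondegeneracy assumption, which rules out constant symbols here; for instance, $\varphi$ is affine and nonconstant in the complex symmetric setting, or one can note this exceptional case explicitly.

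The expected obstacle is therefore just this cancellation step. It is handled by the discreteness of zeros of a nonzero entire function, together with stating the nondegeneracy hypotheses ($f,\psi\not\equiv0$, and nonconstant symbols) under which the lemma is meant.
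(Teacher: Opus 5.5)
Your proof is correct and follows the same route as the paper's (express both products as weighted composition operators and compare), but it is more careful: the paper just asserts that $W_{f\cdot\psi\circ g,\,\varphi\circ g}=W_{\psi\cdot f\circ\varphi,\,g\circ\varphi}$ forces equality of the symbols, whereas you prove this by testing on $1$ and $z$ and cancelling $F$, and you correctly note that the ``only if'' direction fails without nondegeneracy hypotheses (e.g.\ $f\equiv0$ gives $W_{f,g}=0$, which commutes with everything). One small correction to your side remark: in the complex symmetric setting the degenerate case is ruled out not because $\varphi$ is nonconstant (Theorem~\ref{t1} allows $A=0$) but because $\psi=Ce^{Dz}$ has no zeros, so $F\not\equiv0$ as soon as $f\not\equiv0$.
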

\begin{proof}
    We compute
    $$W_{f,g}W_{\psi, \varphi}= W_{f.\psi \circ g , \varphi \circ g} ~\text{ and }~ W_{\psi, \varphi}W_{f,g}= W_{\psi. f\circ \varphi , g\circ \varphi}. $$ Since we have
    $$W_{f,g}\in \{W_{\psi, \varphi}\}^{'} \text{ if and only if } W_{f,g}W_{\psi, \varphi}= W_{\psi, \varphi}W_{f,g}.$$
    Hence $W_{f,g}\in \{W_{\psi, \varphi}\}^{'}$
    if and only if $f.\psi \circ g =\psi .f\circ \phi$ and $\varphi \circ g = g \circ \varphi$.
\end{proof}
\begin{lemma}\label{l2} 
Let $f$ and $g$ be two entire function with $f\neq 0$ and $W_{\psi, \varphi}$ be $J_{a,b,c}$-complex symmetric weighted  composition operators on $\mathcal{F}^{2}$ with $\varphi$ has a fixed point 
$0\neq d(=\frac{B}{1-A})$ in $\mathbb{C}$. If $W_{f,g} \in \{W_{\psi, \varphi}\}^{'}$, then  $\varphi$ and $g$ have a same fixed point $d$.
\end{lemma}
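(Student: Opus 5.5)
The plan is to use only the second (symbol) condition in Lemma \ref{1l}. By Theorem \ref{t1}, a $J_{a,b,c}$-symmetric $W_{\psi,\varphi}$ has affine symbol $\varphi(z)=Az+B$ and weight $\psi(z)=Ce^{Dz}$. The hypothesis that $\varphi$ has the fixed point $d=\frac{B}{1-A}$ implicitly forces $A\neq 1$, since otherwise this expression is undefined. Suppose $W_{f,g}\in\{W_{\psi,\varphi}\}'$. Then Lemma \ref{1l} gives the functional equation $\varphi\circ g=g\circ\varphi$, which in this setting reads
\[
A\,g(z)+B=g(Az+B),\qquad z\in\mathbb{C}.
\]

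Next I evaluate this identity at the fixed point $z=d$. Since $Ad+B=\varphi(d)=d$, the right-hand side becomes $g(d)$, and the equation becomes
\[
A\,g(d)+B=g(d),\qquad\text{i.e.}\qquad \varphi\bigl(g(d)\bigr)=g(d).
\]
So $g(d)$ is itself a fixed point of $\varphi$. This is just the general principle that a map commuting with $\varphi$ sends fixed points of $\varphi$ to fixed points of $\varphi$.

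It then remains to show that $\varphi$ has only one fixed point. Because $A\neq1$, the equation $(1-A)w=B$ has the single solution $w=\frac{B}{1-A}=d$. Hence $g(d)=d$, so $d$ is a common fixed point of $\varphi$ and $g$. The degenerate case $A=0$ (constant $\varphi\equiv B$) is included: then $d=B$, and the identity reads $B=g(B)$ directly.

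There is no real obstacle in this argument. The only point needing care is to state explicitly that the existence of $d=\frac{B}{1-A}$ presupposes $A\neq1$, which is what gives uniqueness of the fixed point. The hypotheses $f\not\equiv0$ and $d\neq0$ are not used in this route; the weight condition $f\cdot\psi\circ g=\psi\cdot f\circ\varphi$ from Lemma \ref{1l} is also not needed.

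An alternative route would use the eigenvectors $\Omega_k$ from Theorem \ref{t2} together with the kernel formula \eqref{1}. That route would exploit the nonvanishing of $f$, but the direct functional-equation argument above is shorter, and I would present it.
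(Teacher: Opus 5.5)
Your argument is correct and is the same as the paper's: you apply $\varphi\circ g=g\circ\varphi$ from Lemma \ref{1l} at $d$ to get $\varphi(g(d))=g(d)$, and conclude $g(d)=d$ from uniqueness of the fixed point. You make that uniqueness explicit via $A\neq 1$, which the paper leaves implicit.

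One correction to your closing remark: the hypothesis $f\not\equiv 0$ is in fact used. It is exactly what makes the forward direction of Lemma \ref{1l} valid. Evaluating $W_{f,g}W_{\psi,\varphi}=W_{\psi,\varphi}W_{f,g}$ on $1$ and on $z$ yields $\varphi\circ g=g\circ\varphi$ only after cancelling the weight $f\cdot\psi\circ g$, which is $\not\equiv 0$ precisely because $f\not\equiv 0$ and $\psi=Ce^{Dz}$ is zero-free. If $f\equiv 0$, then $W_{f,g}=0$ commutes with everything, $g$ is arbitrary, and the conclusion fails.
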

\begin{proof}
   Since by lemma\ref{1l}, we have 
   $$W_{f,g} \in \{W_{\psi, \varphi}\}^{'} ~\text{ if and only if }~ f.\psi \circ g =\psi .f\circ \phi ~\text{ and }~ \varphi \circ g = g \circ \varphi $$

   and by Theorem \ref{t1} $W_{\psi, \varphi}$ is $J_{a,b,c}$-complex symmetric weighted  composition operators with $\varphi$ has a fixed point 
$0\neq d(=\frac{B}{1-A})$ in $\mathbb{C}$. Then we have 
$$\varphi(g(d)) = g (\varphi(d))= g(d)$$
which implies $g(d)=d$. This proves that $\varphi$ and $g$ have a same fixed point $d$.
\end{proof}

\begin{lemma}\label{l1}Let $\eta= \alpha(1-\beta)$, $\alpha=\frac{D}{1-A}$, $d=\frac{B}{1-A}$ and $\beta \in \mathbb{C}$, Then the following  identities hold:
\begin{enumerate}
\item $D+\alpha A = \alpha$ and $\alpha B= dD$
\item $\eta+D\beta= D+\eta A$ 
\item $D(1-\beta)d= \eta B$ 
\item $ A(1-\beta)d+B= (1-\beta)d+B\beta$
\end{enumerate}
\end{lemma}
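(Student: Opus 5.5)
These are algebraic identities that follow directly from the definitions $\alpha=\frac{D}{1-A}$, $d=\frac{B}{1-A}$ and $\eta=\alpha(1-\beta)$. Implicitly $A\neq 1$, which is exactly the case in which $\varphi(z)=Az+B$ has the fixed point $d$. The plan is to clear denominators, using $\alpha(1-A)=D$ and $d(1-A)=B$, and then expand each identity.

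\emph{Item (1).} Since $\alpha(1-A)=D$, we have $\alpha-\alpha A=D$, which is $D+\alpha A=\alpha$. For the second part, $\alpha B=\frac{DB}{1-A}=D\cdot\frac{B}{1-A}=dD$.

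\emph{Item (2).} Substitute $\eta=\alpha(1-\beta)$ and rearrange the claim as $\eta(1-A)=D(1-\beta)$. The left side is $\alpha(1-A)(1-\beta)=D(1-\beta)$ by item (1). Expanding $D(1-\beta)=D-D\beta$ and moving terms across gives $\eta+D\beta=D+\eta A$.

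\emph{Item (3).} We have $\eta B=\alpha(1-\beta)B=(1-\beta)\,\alpha B=(1-\beta)\,dD$, using $\alpha B=dD$ from item (1). This equals $D(1-\beta)d$.

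\emph{Item (4).} Subtract the right side from the left. This gives $(A-1)(1-\beta)d+B-B\beta=-(1-\beta)B+(1-\beta)B=0$, using $(A-1)d=-B$.

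There is no real obstacle here; the only point to flag is that $A\neq1$ is needed for $\alpha$ and $d$ to be defined. In the intended application this holds because $\varphi$ has a fixed point, as in Theorem \ref{t2}.
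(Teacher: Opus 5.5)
Your proof is correct and is essentially the paper's argument. Both verify each identity by direct algebra from $\alpha=\frac{D}{1-A}$, $d=\frac{B}{1-A}$, $\eta=\alpha(1-\beta)$: you clear denominators via $\alpha(1-A)=D$ and $d(1-A)=B$ and reuse item (1), while the paper expands both sides over the common denominator $1-A$. Your version of item (4) also avoids the paper's typo, whose intermediate expression $\frac{B(1-AB)}{1-A}$ should read $\frac{B(1-A\beta)}{1-A}$, and you are right to note that $A\neq 1$ is needed.
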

\begin{proof}
    
       \begin{enumerate}
           \item $D+\alpha A= D+\frac{D}{1-A}A=\frac{D}{1-A}=\alpha$ and $\alpha B= \frac{D}{1-A} B= dD$\\
       \item We compute
       \begin{align*}
            \eta+D\beta &= \alpha(1-\beta)+D\beta\\
            &=\frac{D}{1-A}(1-\beta)+D\beta\\
            &= \frac{D-DA\beta}{1-A}
        \end{align*}  
        and
            \begin{align*}
            D+\eta A &= D+\alpha (1-\beta)A\\
            &= D+\frac{D}{1-A}(1-\beta)A\\
            &=\frac{D-DA\beta}{1-A}
        \end{align*}
        
        Therefore $\eta+D\beta= D+\eta A$ holds.\\
        \item  We compute
        \begin{align*}
        D(1-\beta)d &= \frac{D(1-\beta)B}{1-A}\\
        &= \alpha(1-\beta)B\\
        &= \eta B 
        \end{align*}
    Therefore $D(1-\beta)d= \eta B$ holds.\\
    
    \item  We compute
        \begin{align*}
        A(1-\beta)d+B &= \frac{A(1-\beta)B}{1-A}+B\\
        &= \frac{B(1-AB)}{1-A}
    \end{align*}
    and 
       \begin{align*}
        (1-\beta)d+B\beta &= (1-\beta)\frac{B}{1-A}+ B\beta\\
        &= \frac{B(1-AB)}{1-A}
    \end{align*} 
    
Therefore $ A(1-\beta)d+B= (1-\beta)d+B\beta$ holds.
\end{enumerate}
\end{proof}

\begin{Theorem}\label{t3}
Let $f$ and $g$ be two entire function with $f\neq 0$ and $W_{\psi, \varphi}$ be $J_{a,b,c}$-complex symmetric weighted  composition operators on $\mathcal{F}^{2}$ with $\varphi$ has a fixed point 
$d(=\frac{B}{1-A})$ in $\mathbb{C}$. Then $W_{f,g} \in \{W_{\psi, \varphi}\}^{'}$ if and only if symbols functions $f$ and $g$ are of the following forms:
\begin{equation}\label{e1}
  g(z)= \beta z+ (1-\beta) d \text{~~ and ~~} f(z)= f(d) e^{\eta(z-d)}  
\end{equation}

where  $\beta \in \mathbb{C}$ and $ \eta = \alpha (\beta -1)$ .
\end{Theorem}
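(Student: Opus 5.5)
The plan is to use Lemma \ref{1l} to turn $W_{f,g}\in\{W_{\psi,\varphi}\}'$ into the two functional equations
\[
f\cdot\psi\circ g=\psi\cdot f\circ\varphi,\qquad \varphi\circ g=g\circ\varphi,
\]
and then solve them with the explicit symbols $\varphi(z)=Az+B$ and $\psi(z)=Ce^{Dz}$ from Theorem \ref{t1}. Sufficiency is a direct check. Take $g(z)=\beta z+(1-\beta)d$. Then $\varphi\circ g=g\circ\varphi$ reduces to identity (4) of Lemma \ref{l1}. With $f(z)=f(d)e^{\eta(z-d)}$, the weighted equation compares two exponentials of affine functions. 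The coefficients of $z$ match by identity (2) of Lemma \ref{l1}, and the constant terms match by identity (3). Note that Lemma \ref{l1} uses $\eta=\alpha(1-\beta)$. Solving $\eta+D\beta=D+\eta A$ directly gives $\eta(1-A)=D(1-\beta)$, so $\eta=\alpha(1-\beta)$ is the correct sign, and I would state the theorem with that sign.

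For necessity, first pin down $g$. Since $W_{f,g}$ lies in the commutant, it is bounded. By Le's characterization \cite{TL1} of bounded weighted composition operators on $\mathcal{F}^2$, $g(z)=\beta z+\gamma$ with $|\beta|\le 1$. Substituting into $\varphi\circ g=g\circ\varphi$, the linear terms agree automatically, and the constant terms give $A\gamma+B=\beta B+\gamma$. Since $\varphi$ has a fixed point we have $A\neq 1$, so $\gamma=B(1-\beta)/(1-A)=(1-\beta)d$. This is consistent with Lemma \ref{l2}.

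Next, determine $f$. The weighted equation reads
\[
f(\varphi(z))=f(z)\,e^{D(g(z)-z)}=f(z)\,e^{D(\beta-1)(z-d)}.
\]
Set $u(z)=f(z)e^{-\eta(z-d)}$ with $\eta=\alpha(1-\beta)$. The identity $\eta(1-A)=D(1-\beta)$ shows that $u$ is entire and $u\circ\varphi=u$. In the coordinate $w=z-d$ the map $\varphi$ becomes $w\mapsto Aw$, so $v(w)=u(d+w)$ satisfies $v(Aw)=v(w)$. If $|A|<1$, iterating gives $v(w)=\lim_n v(A^n w)=v(0)$. If $|A|=1$ and $A$ is not a root of unity, comparing Taylor coefficients gives $c_nA^n=c_n$, hence $c_n=0$ for $n\ge1$. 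In both cases $u\equiv u(d)=f(d)$, so $f(z)=f(d)e^{\eta(z-d)}$.

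The main obstacle is the case where $|A|=1$ and $A$ is a root of unity, $A^m=1$ with $A\neq1$. There the invariance only forces $v$ to be an entire function of $w^m$, and boundedness of $W_{f,g}$ does not obviously exclude a polynomial factor when $|\beta|<1$. I would first try to rule this case out using the extra condition $D+A\overline{B}=0$ from Theorem \ref{t1} together with Le's boundedness criterion: when $|\beta|=1$, Le's theorem already forces $f$ to be a pure exponential, and comparing that exponential with $u$ gives $u$ constant. If the case $|\beta|<1$ resists this, I expect the statement to need the additional hypothesis that $A$ is not a nontrivial root of unity, and I would add it explicitly.
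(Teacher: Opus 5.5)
Your necessity argument is correct wherever it claims to be, and it takes a different route from the paper. The paper never uses Le's boundedness criterion. It takes $K_{\overline{\alpha}}$ and $\Omega_1=(z-d)K_{\overline{\alpha}}$ from Theorem~\ref{t2} and argues that $W_{f,g}$ maps each to a scalar multiple of itself. Dividing the two eigen-equations gives $g(z)-d=\beta(z-d)$, and then $f(z)e^{\alpha g(z)}=f(d)e^{\alpha z}$ gives $f$. That is short, but it silently assumes the eigenspaces for $\psi(d)$ and $A\psi(d)$ are one-dimensional. This fails when $A=0$, where the eigenspace for $0$ is $\{h:h(B)=0\}$. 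It also fails when $A^m=1\neq A$, where $\Omega_m$ shares the eigenvalue $\psi(d)$. Your route costs an appeal to Le's theorem, which makes $g$ affine, followed by the $\varphi$-invariant $u=f\,e^{-\eta(z-d)}$. In return it covers $A=0$ correctly and shows exactly where the hypothesis on $A$ enters. Your sign $\eta=\alpha(1-\beta)$ is also right; it is what the paper's proof actually derives.

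Your root-of-unity worry is justified, and no extra argument will remove it: the necessity direction is false there. The condition $D+A\overline{B}=0$ cannot help, since for $|A|=1$ it only says $\alpha=\overline{d}$. Take $\varphi(z)=-z$ and $\psi\equiv 1$, so $A=-1$, $B=D=0$, $d=0$; this operator is $J_{1,0,1}$-symmetric by Theorem~\ref{t1}. Take $g\equiv 0$ and $f(z)=z^{2}$. Then $W_{f,g}h=h(0)z^{2}$ is bounded of rank one and commutes with $h\mapsto h(-z)$. Yet $f\not\equiv f(d)e^{\eta(z-d)}=0$. More generally, suppose $A^{m}=1\neq A$ and $|\beta|<1$. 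Then $g(z)=\beta z+(1-\beta)d$ with $f(z)=(z-d)^{m}e^{\eta(z-d)}$ gives a bounded element of the commutant, since $u=(z-d)^{m}$ is $\varphi$-invariant. So add the hypothesis that $A$ is not a nontrivial root of unity. Your Le-based argument does settle the subcase $|\beta|=1$.

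The genuine gap in your proposal is the sufficiency direction, which you share with the paper. Checking the two identities of Lemma~\ref{1l} only shows that the operators intertwine formally. Membership in $\{W_{\psi,\varphi}\}'$ also requires $W_{f,g}$ to be bounded, which you yourself used in the necessity part. Le's criterion is $\sup_{z}|f(z)|e^{(|g(z)|^{2}-|z|^{2})/2}<\infty$. It fails for every $|\beta|>1$. For $|\beta|=1$, $\beta\neq 1$, it forces $f(z)=f(0)e^{-\beta\overline{(1-\beta)d}\,z}=f(0)e^{(1-\beta)\overline{d}z}$, i.e.\ $\alpha=\overline{d}$. That is automatic when $|A|=1$, but not when $|A|<1$. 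For example, take $A=\frac12$, $B=1$, $D=\frac i2$, so $d=2$ and $\alpha=i$, and take $\beta=i$. The pair $g(z)=\beta z+(1-\beta)d$, $f(z)=f(d)e^{\eta(z-d)}$ then satisfies both functional equations, but $W_{f,g}$ is unbounded. Unless the theorem is read as presupposing that $W_{f,g}$ is bounded, the ``if'' direction holds only in three cases: $|\beta|<1$, or $\beta=1$, or $|\beta|=1$ with $\alpha=\overline{d}$.
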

\begin{proof}
First, by using identity $(1)$ of lemma\ref{l1}, we compute
\begin{align*}
    W_{\psi, \varphi}K_{\overline{\alpha}}(z)&= C.e^{Dz}e^{\alpha \varphi(z)}\\
    &= C. e^{(D+\alpha A)z+\alpha B}\\
    &= \psi(d)K_{\overline{\alpha}}(z)
\end{align*}

  Since $W_{f,g} \in \{W_{\psi, \varphi}\}^{'}$ and $W_{\psi, \varphi}$ be $J_{a,b,c}$-complex symmetric , then by theorem\ref{t2} we have

  \begin{align*}
  W_{\psi, \varphi}W_{f,g}K_{\overline{\alpha}}&= W_{f,g}W_{\psi, \varphi}K_{\overline{\alpha}}\\
  &= \psi(d)W_{f,g}K_{\overline{\alpha}}
  \end{align*}
If $W_{f,g}K_{\overline{\alpha}}=0$, then $K_{\overline{\alpha}}$ is an eigen vector of $W_{f,g}$ corresponding to eigen value 0.
If $W_{f,g}K_{\overline{\alpha}}\neq 0$, then $W_{f,g}K_{\overline{\alpha}}$ is an eigen vector of $W_{f,g}$ corresponding to eigen value 
$\psi(d)$. Then by theorem\ref{t1} we have $W_{f,g}K_{\overline{\alpha}}= \gamma_{1}K_{\overline{\alpha}}$ for some non zero $\gamma_{1}\in \mathbb{C}$. Again by theorem\ref{t1}, the function $\Omega_{1}= (z-d)K_{\overline{\alpha}}$ is an eigen vector of $W_{\psi, \varphi}$ corresponding to eigen value $A\psi(d)$.
Since $W_{f,g} \in \{W_{\psi, \varphi}\}^{'}$, we have 
\begin{align*}W_{\psi, \varphi}W_{f,g}\Omega_{1}&=W_{f,g}W_{\psi, \varphi}\Omega_{1}\\
&=A\psi(d) W_{f,g}\Omega_{1} 
\end{align*}

This show that $W_{f,g}\Omega_{1}$  is an eigenvector corresponding to the eigenvalue $A\psi(d)$. Therefore $W_{f,g}\Omega_{1}=\gamma_{2}\Omega_{1}$, for some $\gamma_{2} \in \mathbb{C}$. Thus we have

\begin{align*}
   W_{f,g}\Omega_{1}&=\gamma_{2}\Omega_{1}\\
    f(z) (g(z)-d)K_{\overline{\alpha}}(g(z))&= \gamma_{2} (z-d)K_{\overline{\alpha}}(z)\\
    (W_{f,g}K _{\overline{\alpha}}(z))(g(z)-d))&= \gamma_{2}(z-d)K_{\overline{\alpha}}(z)\\
    \gamma_{1}K_{\overline{\alpha}}(z)(g(z)-d)&=\gamma_{2} (z-d)K_{\overline{\alpha}}(z)\\
    g(z)&=\beta(z-d)+d\\
    g(z)&=\beta z+ (1-\beta) d
\end{align*}
where $\beta=\frac{\gamma_{2}}{\gamma_{1}} \in \mathbb{C}.$

Now, let $z=d$ in  $W_{f,g}K_{\overline{\alpha}}(z)= \gamma_{1}K_{\overline{\alpha}}(z)$, by lemma\ref{l2}, we get that $\gamma_{1}=f(d)$. Therefore, we have
\begin{align*}
    W_{f,g}K_{\overline{\alpha}}(z)&= \gamma_{1}K_{\overline{\alpha}}(z)\\
   f(z)e^{\alpha g(z)}&=f(d)e^{\alpha z} \\
   f(z)&= f(d)e^{\alpha z-\alpha g(z)}\\
   f(z)&= f(d) e^{\alpha z-\alpha(\beta z+ (1-\beta) d)}\\
   f(z)&= f(d) e^{\eta(z-d)}
\end{align*}
where $\eta = \alpha(1-\beta)$.\\
Conversely, suppose that $f$ and $g$ are of the form in \eqref{e1}. To complete the proof we have to show that 
$$f.\psi \circ g = \psi. f\circ \varphi \text{ and }\varphi \circ g= g\circ \varphi.$$
Thus we have 
\begin{align*}
 f(z)\psi (g(z)) &= f(d) e^{\eta(z-d)} C. e^{Dg(z)}\\
 &= C.f(d)e^{\eta z-\eta d+D(\beta z +(1-\beta)d)}\\
 &= C.f(d)e^{(\eta+D\beta)z+(D(1-\beta)d-\eta d)}
\end{align*}
and 
\begin{align*}
    \psi(z) f( \varphi (z))&= C. e^{Dz} f(d)e^{\eta [(Az+B)-d]}\\
    &= C. e^{Dz} f(d) e^{(D+\eta A)z+(\eta B-\eta d)}
\end{align*}
Clearly, by using identities $(1)$ and $(2)$ of lemma\ref{l1}, we conclude that $f.\psi \circ g = \psi. f\circ \varphi $. Now we compute
\begin{align*}
    \varphi( g(z))&= Ag(z)+B\\
    &= A(\beta z+ (1-\beta) d)+B
\end{align*}
and 
\begin{align*}
    g(\varphi(z))&= \beta \varphi(z)+ (1-\beta) d\\
    &= \beta (Az+B)+(1-\beta) d\\
\end{align*}
Again by using identity $(3)$ of lemma\ref{l1}, we conclude that $\varphi \circ g= g\circ \varphi.$ This completes the proof of the theorem.
\end{proof}

\section{Self adjointness and Normality of commuting weighted composition operators}
This section focuses on commuting weighted composition operators that are self-adjoint or normal. We characterize the symbols that generate such operators and establish rigidity results for affine composition symbols. Several consequences concerning spectral properties and fixed-point structures are also obtained.

\begin{lemma}\label{l3}Let $f$ and $g$ be two entire symbols of the form \eqref{e1} where $W_{f,g}$ be a weighted composition operator commuting with $J_{a,b,c}$-complex symmetric weighted  composition operators $W_{\psi, \varphi}$ on $\mathcal{F}^{2}$ with $\varphi$ has a fixed point 
$d(=\frac{B}{1-A})$ in $\mathbb{C}$. Then 
\begin{itemize}
\item[{(a)}] $W_{f,g}K_{w}(z)=f(d) e^{ \beta \overline{w}z+\eta z+(1-\beta)d \overline{w}-\eta d}$ and $W^{*}_{f,g}K_{w}(z)=\overline{f(d)}e^{\overline{\beta}\overline{w}z+\overline{\eta}\overline{w}+(1-\overline{\beta}) \overline{d}z- \overline{\eta}\overline{d}}$
    \item[{(b)}] $W_{f,g}W^{*}_{f,g}K_{w}(z)= |f(d)|^{2}e^{-\overline{\eta}\overline{d}-\eta d+|1-\beta|^{2}|d|^{2}+|\beta|^{2}\overline{w}z+[\overline{\eta}+\overline{\beta}(1-\beta)d]\overline{w}+[ \eta +\beta(1-\overline{\beta})\overline{d}]z}$\\
    \item[{(c)}] $W^{*}_{f,g}W_{f,g}K_{w}(z)=|f(d)|^{2}e^{|\eta|^{2}-\eta d-\overline{\eta}\overline{d}+|\beta|^{2}\overline{w}z+[(1-\beta)d+\overline{\eta}\beta]\overline{w}+[\overline{\beta} \eta +(1-\overline{\beta})\overline{d}]z}$
\end{itemize} 
\end{lemma}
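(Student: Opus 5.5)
The plan is a direct computation from the reproducing kernel identities, using only the explicit symbols in \eqref{e1}. I will keep the exponent in the form $f(z)=f(d)e^{\eta(z-d)}$ exactly as it appears in \eqref{e1}; the sign convention for $\eta$ never enters. Two facts do all the work. The first is the action of $W_{f,g}$ on a kernel, $W_{f,g}K_w(z)=f(z)K_w(g(z))=f(z)e^{\overline{w}g(z)}$. The second is the adjoint formula \eqref{1}, read in its standard form $W^{*}_{f,g}K_w=\overline{f(w)}\,K_{g(w)}$. I will also use repeatedly that $W_{f,g}$ maps a kernel to a scalar multiple of another kernel, and track that scalar.

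For (a), I substitute $g(z)=\beta z+(1-\beta)d$ and $f(z)=f(d)e^{\eta z-\eta d}$ into $f(z)e^{\overline{w}g(z)}$ and collect the exponents; this gives the first formula at once. For the adjoint I need $\overline{f(w)}=\overline{f(d)}e^{\overline{\eta}\,\overline{w}-\overline{\eta}\,\overline{d}}$ and $\overline{g(w)}=\overline{\beta}\,\overline{w}+(1-\overline{\beta})\overline{d}$. Then $K_{g(w)}(z)=e^{\overline{g(w)}z}$ gives the second formula.

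For (b), I apply $W_{f,g}$ to the kernel $K_{g(w)}$, multiplied by the constant $\overline{f(w)}$. This gives $\overline{f(w)}\,f(z)\,e^{\overline{g(w)}\,g(z)}$. I then expand
$$\overline{g(w)}g(z)=|\beta|^2\overline{w}z+\overline{\beta}(1-\beta)d\,\overline{w}+\beta(1-\overline{\beta})\overline{d}\,z+|1-\beta|^2|d|^2 .$$
Adding the exponents $\overline{\eta}\,\overline{w}-\overline{\eta}\,\overline{d}$ and $\eta z-\eta d$ from $\overline{f(w)}$ and $f(z)$, and noting $\overline{f(d)}f(d)=|f(d)|^2$, yields (b).

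For (c), the order of the operators is reversed, and this is the step needing the most care. I rewrite (a) as
$$W_{f,g}K_w=f(d)e^{-\eta d+(1-\beta)d\overline{w}}\,K_u, \qquad u=\overline{\beta}w+\overline{\eta},$$
since $e^{(\beta\overline{w}+\eta)z}=K_u(z)$. Because $W^{*}_{f,g}$ is linear, the prefactor comes out unconjugated, and $W^{*}_{f,g}K_u=\overline{f(u)}K_{g(u)}$. I then compute $\overline{f(u)}=\overline{f(d)}e^{\overline{\eta}(\beta\overline{w}+\eta)-\overline{\eta}\,\overline{d}}$ and $\overline{g(u)}=\overline{\beta}(\beta\overline{w}+\eta)+(1-\overline{\beta})\overline{d}$. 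Collecting all exponents gives the constant $|\eta|^2-\eta d-\overline{\eta}\,\overline{d}$, the $\overline{w}$-coefficient $(1-\beta)d+\overline{\eta}\beta$, the $z$-coefficient $\overline{\beta}\eta+(1-\overline{\beta})\overline{d}$, and the cross term $|\beta|^2\overline{w}z$, as claimed.

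The only real obstacle is bookkeeping. One must correctly identify the new kernel index $u$ (with its conjugations) and must not conjugate the scalar prefactor when applying the linear operator $W^{*}_{f,g}$. Everything else is expansion of products of affine expressions.
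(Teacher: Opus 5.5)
Your proposal is correct and follows the same route as the paper. Both proceed by direct kernel computation from $W_{f,g}K_w=f\cdot(K_w\circ g)$ and $W^{*}_{f,g}K_w=\overline{f(w)}K_{g(w)}$, and for (c) both rewrite $W_{f,g}K_w$ as the unconjugated scalar $f(d)e^{(1-\beta)d\overline{w}-\eta d}$ times $K_{\overline{\beta}w+\overline{\eta}}$ before applying the adjoint. Your exponent bookkeeping in (a)--(c) checks out, and keeping $f(z)=f(d)e^{\eta(z-d)}$ without fixing the sign of $\eta$ sensibly sidesteps the paper's inconsistency between $\eta=\alpha(\beta-1)$ and $\eta=\alpha(1-\beta)$.
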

\begin{proof} Since $f$ and $g$ are of the form
\begin{equation}\label{e1}
  g(z)= \beta z+ (1-\beta) d \text{~~ and ~~} f(z)= f(d) e^{\eta(z-d)}  
\end{equation}
where  $\beta \in \mathbb{C}$ and $ \eta = \alpha (\beta -1)$ We compute the following:
\begin{itemize}
\item[{(a)}]
\begin{equation*}
        W_{f,g}K_{w}(z)=f(z)e^{wg(z)}= f(d)e^{\eta(z-d)}e^{\overline{w}( \beta z+ (1-\beta) d)}= f(d) e^{ \beta \overline{w}z+\eta z+(1-\beta)d \overline{w}-\eta d}
    \end{equation*}
    and
\begin{equation*}
    W^{*}_{f,g}K_{w}(z)=\overline{f(w)}e^{\overline{g(w)}z}=\overline{f(d)}e^{\overline{\eta(w-d)}}e^{[\overline{\beta w+ (1-\beta) d}]z}= \overline{f(d)}e^{\overline{\beta}\overline{w}z+\overline{\eta}\overline{w}+(1-\overline{\beta}) \overline{d}z- \overline{\eta}\overline{d}}
\end{equation*}

    \item[{(b)}] \begin{align}
       W_{f,g}W^{*}_{f,g}K_{w}(z)&= \overline{f(w)}W_{f,g}K_{g(w)}(z)\notag\\
       &=\overline{f(w)}f(z)K_{g(w)}(g(z))\notag\\
       &= \overline{f(d)}e^{\overline{\eta(w-d)}}f(d)e^{\eta(z-d)}e^{\overline{[\beta w+(1-\beta)d]}[\beta z+(1-\beta)d]} \notag\\
       &= |f(d)|^{2}e^{|1-\beta|^{2}|d|^{2}-\overline{\eta}\overline{d}-\eta d}e^{|\beta|^{2}\overline{w}z} e^{[\overline{\eta}+\overline{\beta}(1-\beta)d]\overline{w}}e^{[\eta+\beta(\overline{1-\beta})\overline{d}]z}
    \end{align}
and

\item[{(c)}] \begin{align}
    W^{*}_{f,g}W_{f,g}K_{w}(z)&= f(d)e^{(1-\beta)d\overline{w}-\eta d}W^{*}_{f,g} K_{\overline{\beta}w+\overline{\eta}}(z)\notag\\
    &= f(d)e^{(1-\beta)d\overline{w}-\eta d} \overline{f(\overline{\beta}w+\overline{\eta})}K_{g(\overline{\beta}w+\overline{\eta})}(z)\notag\\
    &= f(d)e^{(1-\beta)d\overline{w}-\eta d}\overline{f(d)}e^{\overline{\eta(\overline{\beta}w+\overline{\eta}-d)}}e^{[g(\overline{\beta}w+\overline{\eta})]z}\notag\\
    &= |f(d)|^{2}e^{|\eta|^{2}-\overline{\eta}\overline{d}-\eta d}e^{|\beta|^{2}\overline{w}z} e^{[\overline{\eta}\beta+(1-\beta)d]\overline{w}}e^{[\eta\overline{\beta}+(\overline{1-\beta})\overline{d}]z}
\end{align}
  \end{itemize}  
\end{proof}

\begin{Theorem}\label{t4}
    Let $f$ and $g$ be two entire symbols of the form \eqref{e1} where $W_{f,g}$ be a weighted composition operator commuting with $J_{a,b,c}$-complex symmetric weighted  composition operators $W_{\psi, \varphi}$ on $\mathcal{F}^{2}$ with $\varphi$ has a fixed point 
$d(=\frac{B}{1-A})$ in $\mathbb{C}$. Then 
\begin{enumerate}
\item  $W_{f,g}$ is self-adjoint if and ony if $d= \alpha$ and $f(d),\beta,d,\alpha \in \mathbb{R}$. 
    \item $W_{f,g}$ is normal if and only if $\alpha=\overline{d}$.  
\end{enumerate}

\end{Theorem}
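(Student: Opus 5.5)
The plan is to test both conditions on reproducing kernels, which span a dense subspace of $\mathcal{F}^{2}$. Two bounded operators agree if and only if they agree on every $K_{w}$, and two kernel-type expressions $c_{1}e^{p_{1}\overline{w}z+q_{1}\overline{w}+r_{1}z}$ and $c_{2}e^{p_{2}\overline{w}z+q_{2}\overline{w}+r_{2}z}$ agree for all $z,w$ if and only if $c_{1}=c_{2}$, $p_{1}=p_{2}$, $q_{1}=q_{2}$ and $r_{1}=r_{2}$. To see this, take logarithmic derivatives in $z$ and $w$, or compare Taylor coefficients. So both parts reduce to comparing the explicit formulas of Lemma \ref{l3}. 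I will use the sign convention $\eta=\alpha(1-\beta)$, as in the proof of Theorem \ref{t3} and in Lemma \ref{l3}.

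\textbf{Self-adjointness.} Compare $W_{f,g}K_{w}$ and $W^{*}_{f,g}K_{w}$ from Lemma \ref{l3}(a). Matching the $\overline{w}z$ coefficients gives $\beta=\overline{\beta}$, so $\beta\in\mathbb{R}$. Matching the $z$ coefficients gives $\eta=(1-\overline{\beta})\overline{d}$. Matching the $\overline{w}$ coefficients gives $(1-\beta)d=\overline{\eta}$, which is the conjugate of the $z$ condition. Matching the constants gives $f(d)e^{-\eta d}=\overline{f(d)}e^{-\overline{\eta}\overline{d}}$.

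Now substitute $\eta=\alpha(1-\beta)$. When $\beta\neq 1$, the $z$ condition becomes $\alpha=\overline{d}$. Then $\eta d=(1-\beta)|d|^{2}$ is real, and the constant condition forces $f(d)\in\mathbb{R}$. To reach the stated form $d=\alpha$ with $d,\alpha\in\mathbb{R}$, I have to reconcile $\alpha=\overline{d}$ with the requirement that $d$ itself be real. I expect this step to be the main obstacle. The computation naturally yields only $\alpha=\overline{d}$ together with $\beta,f(d)$ real. I would first check whether the complex-symmetry constraints of Theorem \ref{t1} force reality, namely $D=aB-bA+b$ and the conjugation relations $|a|=1$, $\overline{a}b+\overline{b}=0$. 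If they do not, I would record that the correct condition is $\alpha=\overline{d}$, which coincides with the stated one when $d\in\mathbb{R}$. The degenerate case $\beta=1$ also needs separate handling: then $\eta=0$ and $g(z)=z$, so $W_{f,g}=f(d)I$, which is self-adjoint exactly when $f(d)\in\mathbb{R}$. The converse is immediate: substitute the conditions back into Lemma \ref{l3}(a) and observe that the two expressions agree.

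\textbf{Normality.} Compare Lemma \ref{l3}(b) with Lemma \ref{l3}(c). The $\overline{w}z$ coefficients agree automatically, since both equal $|\beta|^{2}$. The $\overline{w}$ coefficients require $\overline{\eta}+\overline{\beta}(1-\beta)d=(1-\beta)d+\overline{\eta}\beta$, that is, $(1-\beta)\bigl(\overline{\eta}-(1-\overline{\beta})d\bigr)=0$. The $z$ coefficients give the conjugate of this equation. Using $\overline{\eta}=\overline{\alpha}(1-\overline{\beta})$, the condition becomes $|1-\beta|^{2}(\overline{\alpha}-d)=0$. Hence, for $\beta\neq 1$, we get $\alpha=\overline{d}$. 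The constants require $|1-\beta|^{2}|d|^{2}=|\eta|^{2}=|\alpha|^{2}|1-\beta|^{2}$, which follows from $|\alpha|=|d|$.

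So normality holds if and only if $\alpha=\overline{d}$ or $\beta=1$. In the case $\beta=1$ the operator is a scalar and hence trivially normal, and I will note this caveat alongside the stated equivalence. The converse again follows by substituting the conditions back into Lemma \ref{l3}(b) and (c).
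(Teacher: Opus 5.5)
Your route is the paper's: a coefficientwise comparison of the kernel formulas of Lemma \ref{l3}. All of your computations are correct. The problem is the step you flagged and left open. It resolves \emph{against} the statement, so the proposal does not prove part (1) as stated, and no argument can.

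By Theorem \ref{t1}, $D=aB+b(1-A)$, hence $\alpha=ad+b$. Your condition $\alpha=\overline{d}$ therefore reads $\overline{d}-ad=b$, which in general has non-real solutions. For $J_{1,0,1}$ one has $\alpha=d$ automatically, which is why the stated form is correct there when $\beta\neq1$.

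Here is a concrete counterexample. Take $J_{-1,0,1}h(z)=\overline{h(-\overline{z})}$, $\varphi(z)=\frac{z+i}{2}$ and $\psi(z)=e^{-iz/2}$. Then $D=-\frac{i}{2}=aB-bA+b$ and $|A|<1$, so $W_{\psi,\varphi}$ is $J_{-1,0,1}$-symmetric, with $d=i$ and $\alpha=-i$. Take $f=\psi$ and $g=\varphi$, so that $\beta=\frac12$, $f(d)=e^{1/2}$ and $\eta=-\frac{i}{2}$. This $W_{f,g}$ lies in the commutant. A direct check gives $W_{f,g}K_w=W_{f,g}^{*}K_w=e^{\frac{i}{2}\overline{w}+\frac12\overline{w}z-\frac{i}{2}z}$, so $W_{f,g}$ is self-adjoint. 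Yet $d\notin\mathbb{R}$ and $d\neq\alpha$.

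Thus only the ``if'' half of (1) holds, since $d=\alpha\in\mathbb{R}$ gives $\alpha=\overline{d}$. The paper reaches the stated form only through the non sequitur ``$\eta d=\overline{\eta}\,\overline{d}$ implies $\eta=\overline{\eta}$ and $d=\overline{d}$''. All that actually follows is $\eta d\in\mathbb{R}$.

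Your $\beta=1$ caveat is also a genuine counterexample, this time to the ``only if'' halves of both (1) and (2). The paper misses it by tacitly cancelling $1-\beta$. Take $f\equiv1$ and $g(z)=z$, so $W_{f,g}=I$. It commutes with the $J_{1,0,1}$-symmetric $W_{\psi,\varphi}$ given by $\varphi(z)=\frac{z+i}{2}$, $\psi(z)=e^{iz/2}$, where $d=\alpha=i$. It is self-adjoint and normal, yet $d\notin\mathbb{R}$ and $\alpha\neq\overline{d}$.

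So rather than trying to ``reconcile'' with the stated form, state the result your computation actually proves:
\begin{itemize}
\item For $\beta\neq1$, $W_{f,g}$ is self-adjoint if and only if $\beta,f(d)\in\mathbb{R}$ and $\alpha=\overline{d}$.
\item For $\beta\neq1$, $W_{f,g}$ is normal if and only if $\alpha=\overline{d}$.
\item For $\beta=1$, $W_{f,g}=f(d)I$ is always normal, and it is self-adjoint if and only if $f(d)\in\mathbb{R}$.
\end{itemize}
Your normality argument coincides with the paper's, so part (2) is correct apart from this exception.
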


\begin{proof}
\begin{enumerate}
    \item Since $W_{f,g}$ is self-adjoint if and only if $W_{f,g}K_{w}(z)=W^{*}_{f,g}K_{w}(z)$. Therefore by lemma\ref{l3}, $W_{f,g}$ is self adjoint if and only if
$f(d)=\overline{f(d)}$ and 
\begin{equation}\label{e2}
\beta \overline{w}z+\eta z+(1-\beta)d \overline{w}-\eta d = \overline{\beta}\overline{w}z+\overline{\eta}\overline{w}+(1-\overline{\beta}) \overline{d}z- \overline{\eta}\overline{d}
\end{equation}
let $z=w=0$ in \eqref{e2}, we get $\eta d =\overline{\eta}\overline{d}$ implies $\eta=\overline{\eta}$ and $d= \overline{d}$. 
Now take $z=0$, we get 
    $(1-\beta)d =\overline{\eta}$ implies  $d=\alpha$.
Thus  $W_{f,g}$ is self-adjoint if and only if  $d= \alpha$ and $f(d),\beta,d,\alpha \in \mathbb{R}$.    

\item Since $W_{f,g}$ is normal if and only if $W_{f,g}W^{*}_{f,g}K_{w}(z)=W^{*}_{f,g}W_{f,g}K_{w}(z)$. Therefore by lemma\ref{l3} $W_{f,g}$ is normal if and only if 
\begin{align}\label{e3}
  &|1-\beta|^{2}|d|^{2}-\overline{\eta}\overline{d}-\eta d+|\beta|^{2}\overline{w}z+[\overline{\eta}+\overline{\beta}(1-\beta)d]\overline{w}+[ \eta +\beta(1-\overline{\beta})\overline{d}]z \notag \\
  &= |\eta|^{2}-\eta d-\overline{\eta}\overline{d}+|\beta|^{2}\overline{w}z+[(1-\beta)d+\overline{\eta}\beta]\overline{w}+[\overline{\beta} \eta +(1-\overline{\beta})\overline{d}]z
 \end{align}
let $z=w=0$ in \eqref{e3}, we get $|d|=|\alpha|$. 
 Now, take $z=0$ in \eqref{e3}, we get $\overline{\eta}+\overline{\beta}(1-\beta)d]= (1-\beta)d+\overline{\eta}\beta$, implies $d=\overline{\alpha}$.
Hence, $W_{f,g}$ is normal if and only if $\alpha=\overline{d}$.
\end{enumerate}
\end{proof}
\begin{ex}
   Consider $$\varphi(z)=\frac{z}{2}+1 ~\text{ and } ~\psi(z)= \frac{e^{\frac{iz}{2}}}{2}$$ as entire function, where $A=\frac{1}{2}, B=1, C= \frac{1}{2}, D= \frac{i}{2}, d= 2,$ and $ \alpha =i.$\\
   If we choose $a=0,b=i,$ and $c=\frac{1}{2}$, then it is easy to check $W_{\psi, \varphi}$ is $J_{a,b,c}$- symmetric on $F^{2}(\mathbb{C})$ because $$|A|<1~\text{ and }~ D=\frac{i}{2}= aB-bA+b.$$
   Now, consider the entire functions $$g(z)= iz+(1-i)2~ \text{ and }~ f(z)=2e^{(-1-i)(z-2)} $$ where $\beta=i$. One can easily check, 
   $$ W_{f,g}\in \{W_{\psi, \varphi}\}^{'}$$
   because $\varphi\circ g= g\circ \varphi$ and $f.\psi\circ g= \psi .f\circ \varphi$ holds but $ W_{f,g}$ is neither self-adjoint nor normal because $i=\alpha \neq d=2$.
\end{ex}

\section{Conclusion}
In this paper, we studied commutants of complex symmetric weighted composition operators on the classical Fock space. Necessary and sufficient conditions for commuting relations were obtained, and structural properties of the inducing symbols were established. We also characterized self-adjoint and normal commuting weighted composition operators. The results presented here contribute to the growing literature on operator theory in Fock spaces and suggest several directions for future research, including extensions to several-variable Fock spaces and other reproducing kernel Hilbert spaces.
Several natural questions remain open. In particular:
\begin{itemize}
\item What happens when the inducing map $\varphi$ is nonlinear?
\item Can one characterize commutants on the Fock space of several variables?
\item What are the corresponding results for unitary or hyponormal weighted composition operators?

\end{itemize}

\section{conflict of interest}
There is no conflict of interest.

\end{document}